%
%

\documentclass[a4paper,12pt]{amsart}

\newtheorem{theorem}{Theorem}[section]
\newtheorem{lemma}[theorem]{Lemma}
\newtheorem{cor}[theorem]{Corollary}

\theoremstyle{definition}
\newtheorem{definition}[theorem]{Definition}

\theoremstyle{remark}
\newtheorem{remark}[theorem]{Remark}

\numberwithin{equation}{section}

\newcommand{\C}{\mathbb{C}}

\newcommand{\R}{\mathbb{R}}

\DeclareMathOperator{\ind}{ind}

\DeclareMathOperator{\ch}{ch}
\DeclareMathOperator{\coker}{coker}

\title{Witten genera of complete intersections}


\author{Michael Wiemeler}
\address{Mathematisches Institut, Universit\"at M\"unster, Einsteinstrasse 62, 48149 M\"unster, Germany}
\email{wiemelerm@uni-muenster.de}
\thanks{The research for this paper was funded by the Deutsche Forschungsgemeinschaft (DFG, German Research Foundation) under Germany's Excellence Strategy EXC 2044 –390685587, Mathematics M\"unster: Dynamics–Geometry–Structure and through CRC1442 Geometry: Deformations and Rigidity at University of M\"unster.
}


\subjclass[2020]{14J45, 14M10, 53C27, 57S15, 58J26}

\keywords{Witten genus, complete intersections, vanishing results}



\begin{document}
\begin{abstract}
  We prove vanishing results for Witten genera of string generalized complete intersections in homogeneous \(\text{Spin}^c\)-manifolds and in other \(\text{Spin}^c\)-manifolds with Lie group actions.
  By applying these results to Fano manifolds with second Betti number equal to one we get new evidence for a conjecture of Stolz.
\end{abstract}

\maketitle


\section{Introduction}

The Witten genus was introduced by Witten as an analogue of the index of a Dirac operator for the free loop space of a closed manifold.
It is of particular interest when the free loop space satisfies a spin condition.
This is the case precisely when the manifold has a string structure, i.e. it has a spin structure and \(\frac{p_1}{2}(M)=0\).
A orientable manifold has a spin structure if and only if \(w_2(M)=0\).
Here \(\frac{p_1}{2}(M)\) is a characteristic class which is defined for spin manifolds and refines the first Pontrjagin class \(p_1(M)\) of \(M\).
Moreover, \(w_2(M)\) is the second Stiefel Whitney class of \(M\).

Analogously to vanishing results for the index of a Dirac operator on a closed spin manifold there are conjectures for the vanishing of the Witten genus of a closed string manifold admitting a metric of positive Ricci curvature (by Stolz \cite{MR1380455}) or admitting a non-trivial \(S^1\)-action (see for example \cite{MR2581907}).

So far all the known vanishing results for the Witten genus fall into three different classes:

\begin{enumerate}
\item The first class is for string manifolds with smooth Lie group actions (see  \cite{MR1331972},  \cite{0963.19002}, \cite{0944.58019}, \cite{MR3599868}, \cite{wiemeler2024conjecture}).
\item The second class is for certain string generalized complete intersections in specific complex manifolds (see \cite[p. 87]{MR1189136}, \cite{foerster07:_stolz}, \cite{MR2373447}, \cite{MR3275823}, \cite{MR3546614}, \cite{xiao2017witten}, \cite{han2023iterated}).
  \item More recently there is also a vanishing result for Witten genera of Riemannian string manifolds satisfying certain curvature conditions (see \cite{bettiol2022curvatureoperatorsrationalcobordism}). However these conditions seem to be unrelated to the condition of positive Ricci curvature.
\end{enumerate}

A generalized complete intersection in a closed oriented manifold \(M\) is the iterated transversal intersection of closed oriented codimension two submanifolds of \(M\).
If \(X\subset M\) is a generalized complete intersection, then the normal bundle of \(X\) extends to a vector bundle over \(M\).
The study of generalized complete intersections and virtual manifolds goes back to Hirzebruch, Thom and others (see for example \cite[Chapter 3]{MR1189136}).

A \(\text{Spin}^c\)-manifold is an orientable manifold with a \(\text{Spin}^c\)-structure.
Such a structure exists on an orientable manifold \(M\) if and only if the second Stiefel Whitney class of \(M\) lies in the image of the natural map \(H^2(M;\mathbb{Z})\rightarrow H^2(M;\mathbb{Z}_2)\).
Examples of such manifolds are unitary manifolds, i.e. manifolds whose stable tangent bundles have complex structures.

Here we prove the following two theorems.
The first is a vanishing result for Witten genera of string generalized complete intersections in homogeneous \(\text{Spin}^c\) manifolds.

\begin{theorem}
  \label{sec:introduction-6}
  Let \(G\) be a compact simply connected Lie group and \(H\subset G\) a closed subgroup which is not a maximal torus of \(G\) such that \(M=G/H\) is \(\text{Spin}^c\) and \(G\) acts almost effectively on \(M\).

  Let \(X\subset M\) be a generalized complete intersection and \(V\rightarrow M\) the extended normal bundle of \(X\).
  Assume that the following holds:
  \begin{enumerate}
  \item \(w_2(V)=w_2(M)\) and
  \item \(p_1(V)=p_1(M)\) modulo torsion.
  \end{enumerate}
  Then the Witten genus of \(X\) vanishes.
\end{theorem}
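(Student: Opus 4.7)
The plan is to express the Witten genus of $X$ as a characteristic integral over the ambient homogeneous space $M$, upgrade this integral to a $G$-equivariant index, and then combine Liu-type rigidity with the constraint that $H$ is not a maximal torus to force the integral to vanish.

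First, since $X$ is a generalized complete intersection in $M$ with extended normal bundle $V \to M$, one has $TX = (TM - V)|_X$ in K-theory, and applying the self-intersection (push-pull) formula yields
\[
  \phi_W(X) = \int_M e(V) \cdot \omega(TM - V),
\]
where $\omega$ denotes the Witten characteristic class, a power series in $q$ with values in $H^*(M;\Q)$ built from the Chern/Pontryagin roots via the theta series defining the Witten genus. Hypothesis (1) gives $w_2(TM - V) = 0$, so $TM - V$ admits a spin structure and $\omega(TM-V)$ is well defined; hypothesis (2) gives $\frac{p_1}{2}(TM - V) = 0$ modulo torsion, supplying the virtual string condition needed for rigidity.

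The next step is to interpret this integral equivariantly. Because $M = G/H$ is simply connected and homogeneous, I would replace $V$ by a $G$-equivariant (i.e.\ homogeneous) bundle associated to an $H$-representation, arranged to have the same rational Chern classes as the original $V$; this does not alter the integral. Then $\int_M e(V) \cdot \omega(TM-V)$ is the non-equivariant evaluation of the index of a virtual $G$-equivariant Witten-twisted $\text{Spin}^c$-Dirac operator $D_V$ on $M$, whose equivariant index lies in $R_q(G)$. The virtual string condition on $TM - V$ is precisely the input to a Bott--Taubes--Liu modularity argument (transformation laws of the relevant theta series and pole control at the cusps of the upper half plane), allowing one to conclude that $\ind_G(D_V)$ is a \emph{constant} element of $R_q(G)$, i.e.\ a modular form times the trivial representation.

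The hard part is then converting constancy into vanishing, and this is where the assumption that $H$ is not a maximal torus is used. If $H$ does not contain any maximal torus of $G$, then a regular one-parameter subgroup $S^1 \subset G$ acts on $G/H$ without fixed points, and Atiyah--Bott--Segal--Singer localization immediately yields $\ind_G(D_V) = 0$. If instead $H \supsetneq T$ has full rank, the $T$-fixed locus on $M$ is the discrete non-empty set $W(G)/W(H)$, and one must instead argue that the sum of the local contributions at these fixed points cancels; I would attempt this by combining the root-space structure of $\mathfrak h/\mathfrak t$ with the $\text{SL}_2(\Z)$-transformation properties of the local Witten theta series at the fixed points, possibly by reducing to one of the vanishing results of class~(1) cited in the introduction. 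This maximal-rank sub-case is the main obstacle I foresee, since vanishing there must come from genuine cancellation between non-trivial fixed-point contributions rather than from an empty Lefschetz sum.
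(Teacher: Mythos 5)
Your reduction of $\phi_W(X)$ to a twisted $\text{Spin}^c$-index on $M$ matches the paper's Lemma~\ref{sec:introduction-1}, and your treatment of the non-maximal-rank case (take an $S^1 \subset G$ acting without fixed points, apply Lefschetz) is exactly the paper's argument there. But the maximal-rank sub-case, which you explicitly flag as the open obstacle, is the real substance of the theorem and your proposal leaves it unresolved.

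The paper does not attempt the cancellation argument you sketch (summing local Witten theta contributions over $W(G)/W(H)$ and hoping for cancellation). Instead it observes that when $H$ has maximal rank, the isotropy group $H$ itself supplies a Weyl-type symmetry to which an existing vanishing theorem (Theorem~\ref{sec:introduction}, quoted from \cite{MR3031643}) applies directly. Concretely: if the identity component $H^0$ is a maximal torus $T$ of $G$, then since $T$ is its own centralizer in $G$, the hypothesis ``$H$ is not a maximal torus'' forces $H/T$ to be a non-trivial finite group acting non-trivially on $T$ by conjugation; thus $H$ sits in an exact sequence $1 \to T \to H \to W(H) \to 1$ of exactly the shape required by that theorem, and the vanishing of $\varphi^c(M;V,0)$ follows. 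If $H^0$ is non-abelian of maximal rank, one restricts the $H$-action on $M$ to the normalizer $N_H(T)$ of a maximal torus $T \subset H$, which again has the required structure with $W(H)$ acting non-trivially. So the mechanism is not rigidity plus fixed-point cancellation but a prior vanishing theorem for $\text{Spin}^c$-indices under actions of ``finite extensions of tori with non-trivial Weyl action'' --- one of the class-(1) results you guessed might be relevant, but you would need to identify it and, crucially, notice that the isotropy $H$ (rather than a torus in $G$) is the group to which it applies, and that ``$H$ is not a maximal torus'' is exactly what makes the Weyl action non-trivial.

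Two smaller points. First, your intermediate claim that modularity forces $\ind_G(D_V)$ to be a \emph{constant} in $R_q(G)$ is closer to a rigidity statement than to what is needed; the theorem from \cite{MR3031643} is a genuine vanishing result whose proof uses modularity together with the non-trivial Weyl action, not constancy alone. Second, in the non-maximal-rank case you gloss over lifting the $S^1$-action into the $\text{Spin}^c$-structure and into $V$; this is needed before one can even speak of an equivariant index, and the paper cites explicit lifting lemmas for it. Your idea of replacing $V$ by a homogeneous bundle with the same rational Chern classes could serve as a substitute, but it would need to be carried out, and one would still need the $\text{Spin}^c$-structure itself to be equivariant.
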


For string generalized complete intersections in \(\text{Spin}^c\) manifolds with non-homoge\-neous actions we have more generally:

  \begin{theorem}
    \label{sec:introduction-5}
  Let \(M\) be a closed connected \(Spin^c\) manifold such that
  \begin{enumerate}
  \item There is an almost effective action of a simply connected compact Lie group \(G\) on \(M\) such that \(M^{G}\neq \emptyset\), or
  \item \(b_1(M)=0\) and there is an almost effective action of a torus \(T\) on \(M\) with \(\dim T>b_2(M)\).
  \end{enumerate}

  Let, moreover, \(X\subset M\) be a generalized complete intersection and \(V\rightarrow M\) be the extended normal bundle of \(X\).
  Assume that the following holds:
  \begin{enumerate}
  \item \(w_2(V)=w_2(M)\) and
  \item \(p_1(V)=p_1(M)\) modulo torsion.
  \end{enumerate}
  Then the Witten genus of \(X\) vanishes.
\end{theorem}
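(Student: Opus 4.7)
The plan is to recast the Witten genus of $X$ as an equivariant index of an elliptic operator on $M$, and then exploit the Lie group symmetry via equivariant rigidity together with fixed-point localization. Because $X\subset M$ is an iterated transversal intersection of codimension-two submanifolds whose normal bundles assemble into $V\to M$, one has $TX=(TM-V)|_X$ as stable tangent bundles and the Poincaré dual of $[X]$ in $M$ is $e(V)$. Expanding the multiplicative characteristic class defining the Witten genus and absorbing the Chern roots of $V$ into Jacobi-theta type expressions yields an identity of the form
\[
  \phi_W(X) = \int_M e(V)\,\Phi_W(TM-V),
\]
which, when $M$ carries the action of $G$ (respectively $T$), admits a natural equivariant refinement with values in $R(G)[[q]]\otimes\Q$ (respectively $R(T)[[q]]\otimes\Q$).

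Next I would reinterpret this equivariant series as the equivariant index of a twisted $\text{Spin}^c$-Dirac operator on $M$. The two hypotheses play exactly the right roles: hypothesis (1), $w_2(V)=w_2(M)$, gives $w_2(TM-V)=0$ and, together with the fixed $\text{Spin}^c$-structure on $M$, ensures that the spinor data for the virtual bundle $TM-V$ exist; hypothesis (2), $p_1(V)\equiv p_1(M)$ modulo torsion, translates to $p_1(TM-V)=0$ modulo torsion, which is the key analytic input ensuring that the twisted equivariant index transforms as a Jacobi form of the expected weight. Invoking the appropriate $\text{Spin}^c$-equivariant rigidity theorem — presumably the author's version from \cite{wiemeler2024conjecture} in the spirit of the Bott--Taubes and Liu rigidity theorems — one concludes that this equivariant index is constant as a function on $G$ (respectively on $T$).

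With rigidity established, I would conclude vanishing as follows. In case (1), the existence of a $G$-fixed point $p\in M^G$ allows evaluation of the rigidly constant equivariant index via Atiyah--Bott localization at $p$; the resulting explicit fixed-point expression, combined with the Jacobi transformation law and the fact that the almost effective $G$-action produces non-trivial isotropy weights on $T_pM$, forces the rigid constant (and hence $\phi_W(X)$) to vanish. In case (2), the rank bound $\dim T>b_2(M)$ combined with $b_1(M)=0$ should, via a Borel/Hsiang-type argument on equivariant cohomology, produce a non-trivial circle $S^1\subset T$ whose fixed-point set is non-empty; applying the rigidity-plus-localization mechanism along this $S^1$ yields the same vanishing.

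The principal obstacle I foresee is Step 2: identifying the mixed integrand $e(V)\,\Phi_W(TM-V)$ as a genuine equivariant $\text{Spin}^c$-index to which a rigidity theorem applies. The bundle $V$ itself is neither assumed spin nor $\text{Spin}^c$, so the twisting must be arranged so that the $w_2$- and $p_1$-defects of $V$ and $TM$ cancel precisely as dictated by the hypotheses, and one must keep careful track of the contribution of the $\text{Spin}^c$ line bundle in the Jacobi-form normalization. The bookkeeping involving the symmetric and exterior powers of $V$ and $TM-V$ appearing in the twist, and the verification that the resulting index actually satisfies the hypotheses of the cited rigidity theorem, is where I expect the proof to require the most delicate work.
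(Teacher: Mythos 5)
Your first step is the paper's Lemma 4.1: both you and the paper pass from the Witten genus of $X$ to the integral $\int_M e(V)\,\Phi_W(TM-V)$ on $M$, i.e.\ to the twisted index $\varphi^c(M;V,0)$ after choosing a $\text{Spin}^c$-structure on $M$ with $c_1^c(M)=c_1(V)$. From there the routes diverge sharply: the paper does not reprove a vanishing theorem, it \emph{cites} one. For case (1) it restricts the $G$-action to the normalizer $N_G(T)$ of a maximal torus --- a group fitting into $1\to T\to N_G(T)\to W\to 1$ with $W$ acting non-trivially on $T$ --- checks $M^{N_G(T)}\supseteq M^G\neq\emptyset$, and invokes Theorem~2.4 of \cite{MR3031643}. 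For case (2) it invokes the argument of Theorem~3.3 of \cite{MR3599868}. Both of these are vanishing theorems tailored precisely to the hypotheses $c_1(V)=c_1^c(M)$ mod torsion and $p_1(V-TM)=0$ mod torsion.

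The gap in your proposal is the step from rigidity to vanishing. ``Rigidity plus a $G$-fixed point plus non-trivial isotropy weights'' does not, by itself, force an equivariantly constant index to be zero: evaluating a constant at a fixed point simply returns the constant, and the Lefschetz sum over $M^{S^1}$ equals the non-equivariant index whether or not it vanishes. The vanishing in Theorem~2.4 of \cite{MR3031643} is not a corollary of rigidity for a single circle; it crucially exploits the non-abelian structure of $N_G(T)$ (the Weyl group must act non-trivially on the torus), and this is exactly why the paper restricts the simply connected $G$-action to $N_G(T)$ rather than using an arbitrary $S^1\subset G$. Your sketch never introduces that restriction, so it cannot access the mechanism that actually produces the zero. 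For case (2), the role of the inequality $\dim T>b_2(M)$ is more delicate than ``a circle with non-empty fixed set'': the bound guarantees the existence of a subcircle for which the equivariant Chern class data on the line bundles (controlling the anomaly in the modular transformation) can be trivialized, and it is that subcircle, not an arbitrary one, that yields vanishing. Finally, your appeal to rigidity also assumes without argument that $\varphi^c(M;V,0)$ is rigid in the $\text{Spin}^c$ setting with the $\Lambda_{-1}(V^*)$-twist under only a torus action; the cited vanishing theorems are precisely what replace that unproved rigidity claim.
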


Here \(b_i(M)\) denotes the \(i\)-th Betti number of \(M\).

The condition on the Stiefel-Whitney classes in the theorems imply that \(X\) is spin, whereas the conditions on the Pontrjagin classes almost imply that \(X\) is string.

A Fano manifold is a compact connected K\"ahler manifold whose first Chern class can be represented by a positive \((1,1)\)-form  (see \cite{MR1668579}).
By Yau's solution \cite{MR0480350} of the Calabi conjecture, all Fano manifolds admit K\"ahler metrics of positive Ricci curvature.
Note that all Fano manifolds are simply connected.

As a consequence of Theorem~\ref{sec:introduction-5} we get the following corollary which gives some new evidence for the Stolz conjecture.

\begin{cor}
  \label{sec:introduction-7}
  Let \(M\) be a Fano manifold with \(b_2(M)=1\). Assume that there is a smooth effective action of a torus \(T\) with \(\dim T=2\) on \(M\) or that there is a smooth almost effective action of a simply connected compact Lie group on \(M\) with a fixed point.
  
  Let, moreover, \(X\subset M\) be a string complete intersection of complex dimension at least four.

  Then \(X\) is Fano and the Witten genus of \(X\) vanishes.
\end{cor}

In the situation of the corollary by a complete intersection in \(M\) we mean a transverse intersection of several complex hypersurfaces of \(M\).
Note that for a string manifold \(Y\) of positive real dimension less than eight there is an integer \(k>0\) such that \(kY\) bounds an orientable manifold.
Therefore if, in the situation of Corollary~\ref{sec:introduction-7},  one has \(0<\dim_{\mathbb{C}} X<4\), then the Witten genus of \(X\) vanishes.

To the best of the author's knowledge, Theorem~\ref{sec:introduction-6} generalizes all previously known vanishing results for Witten genera of string generalized complete intersections in homogeneous spaces  (\cite[p. 87]{MR1189136}, \cite{foerster07:_stolz}, \cite{MR2373447}, \cite{MR3275823}, \cite{MR3546614}).
These previously known results are for very specific situations in which either there is a group action on \(X\) or there is so much information about the cohomology of \(X\) and \(M\) and the characteristic classes of these manifolds known that the Witten genus of \(X\) can be computed directly.
The advantage of our proof is that it needs neither an action on \(X\) nor any information about the cohomology ring structure of \(M\).
What suffices is the group action on \(M\).

As a consequence of Theorem~\ref{sec:introduction-5} we get a generalization of a recent result from \cite{han2023iterated} (see Corollary~\ref{sec:proofs-main-results-2}).
This corollary gives a vanishing result for Witten genera for certain string generalized complete intersections in so-called generalized Bott manifolds which are not Bott manifolds (see Section~\ref{sec:consequences-1} for the definition of these types of manifolds).

Theorem~\ref{sec:introduction-5} also partially generalizes the main result of \cite{xiao2017witten} which gave a vanishing result for certain string generalized complete intersections in toric manifolds.
Our condition on the characteristic classes of \(V\) is weaker than the corresponding condition in that result.
However, while Xiao's result works for arbitrary toric manifolds \(M\), in our case there is the condition \(2b_2(M)<2\dim T=\dim M\) where \(T\) is the torus acting on \(M\).

The proofs of the theorems are based on the observation that the Witten genus of \(X\) is equal to the index of some twisted Dirac operator on \(M\).
The theorems then follow from vanishing results for these indices which were proved in \cite{MR3031643} and \cite{MR3599868}.

The claim about the Witten genus of \(X\) in the corollary follows from Theorem~\ref{sec:introduction-5} in combination with the Lefschetz hyperplane theorem (see \cite{MR0215323}).
The claim about the Ricci curvature of \(X\) in the corollary follows from an application of an inequaltity for the second Chern class of a Fano manifold from \cite{MR4000253}.

This paper is organized as follows in Sections \ref{intro} and \ref{sec:textspinc-manifolds} we collect preliminaries on (generalized) complete intersections and \(\text{Spin}^c\)-manifolds.
Then in Section~\ref{sec:proofs-main-results-4} we prove the two theorems from above.
In Section~\ref{sec:consequences-1} we discuss Corollary~\ref{sec:introduction-7} and other consequences of these theorems.

I would like to thank Hans-Joachim Hein for discussions on K\"ahler manifolds and Anand Dessai for comments on an earlier version of this paper.

\section{(Generalized) complete intersections}
\label{intro}

In this section we define (generalized) complete intersections and state some of their properties.

We first define complete intersections in complex manifolds.

Let \(M\) be a compact complex manifold and \(L_1,\dots,L_k\) be base point free holomorphic line bundles over \(M\).
In this case, by Bertini's theorem \cite[p. 137]{MR1288523}, a generic holomorphic section \(s\) of \(V=L_1\oplus\dots\oplus L_k\) is transversal to all subbundles of the form \(L_{i_1}\oplus \dots \oplus L_{i_{k'}}\) with \(1\leq i_j< i_{k'}\leq k\), \(0\leq k'<k\).
Let \(X=s^{-1}(0)\) be the zero locus of such a section.
Then \(X\) is a complex submanifold of \(M\) which is the transversal intersection of complex hypersurfaces \(X_1,\dots,X_k\subset M\).
We call such an \(X\) a complete intersection in \(M\).
As noted by Thom the diffeomorphism type of \(X\) only depends on the isomorphism classes of the holomorphic line bundles \(L_1,\dots,L_k\).

We now turn to the definition of generalized complete intersections in oriented manifolds (cf. \cite[Chapter 3]{MR1189136}).

Let \(M\) be a closed oriented manifold and \(L_1,\dots,L_k\) be complex line bundles over \(M\).
In this case, a generic smooth section \(s\) of \(V=L_1\oplus\dots\oplus L_k\) is transversal to all subbundles of the form \(L_{i_1}\oplus \dots \oplus L_{i_{k'}}\) with \(1\leq i_j< i_{k'}\leq k\), \(0\leq k'<k\).
Let \(X=s^{-1}(0)\) be the zero locus of such a section.
Then \(X\) is a closed oriented submanifold of \(M\) which is the transversal intersection of closed oriented submanifolds \(X_1,\dots,X_k\subset M\) of codimension two.
We call such an \(X\) a generalized complete intersection in \(M\).
In this case the rational oriented bordism class of \(X\) only depends on the isomorphism classes of the complex line bundles \(L_1,\dots,L_k\) \cite[p. 35]{MR1189136}.

In both cases the normal bundle of \(X\) in \(M\) is given by the restriction of \(V\) to \(X\).
Moreover, the Poincare duals of the \(X_i\) are given by the first Chern classes of the \(L_i\).
In particular, the Poincare dual of \(X\) is given by the Euler class of \(V\).

\section{$\text{Spin}^c$-manifolds}
\label{sec:textspinc-manifolds}

In this section we review some known facts about \(\text{Spin}^c\)-manifolds.
For more background information about the group \(\text{Spin}^c(k)\) and \(\text{Spin}^c\)-structures on manifolds see for example \cite{0146.19001}, \cite{0247.57010} and \cite{0395.57020}.

An orientable manifold \(M\) has a \(\text{Spin}^c\)-structure if and only if the second Stiefel-Whitney-class \(w_2(M)\)  of \(M\) is the \(\!\!\!\!\mod 2\)-reduction of an integral class \(c\in H^2(M;\mathbb{Z})\).
Associated to a \(\text{Spin}^c\)-structure on \(M\) there is a complex line bundle.
We denote the first Chern-class of this line bundle by \(c_1^c(M)\).
Its \(\!\!\!\!\mod 2\)-reduction is \(w_2(M)\) and we should note that any integral cohomology class whose \(\!\!\!\!\mod 2\)-reduction is \(w_2(M)\) may be realized as the first Chern-class of a line bundle associated to some \(\text{Spin}^c\)-structure on \(M\).

Let \(M\) be a closed \(2n\)-dimensional \(\text{Spin}^c\)-manifold, i.e. a closed \(2n\)-dimensional manifold together with a choice of a \(\text{Spin}^c\)-structure  on it.
We then have a \(\text{Spin}^c\)-Dirac operator \(\partial_c\) on \(M\).
Its index is defined as
\begin{equation*}
  \ind(\partial_c) = \dim \ker \partial_c - \dim \coker \partial_c \in \mathbb{Z}.
\end{equation*}

Let \(V\) be a complex vector bundle over \(M\) and \(W\) an even-dimensional \(\text{Spin}\) vector bundle over \(M\).
With this data we build a power series \(R\in K(M)[[q]]\) defined by
\begin{align*}
  R&= \bigotimes_{k=1}^\infty S_{q^k}(\tilde{TM}\otimes_\R \C)\otimes \Lambda_{-1}(V^*)\otimes \bigotimes_{k=1}^\infty \Lambda_{-q^k}(\tilde{V}\otimes_\R \C)\\& \otimes \Delta(W)\otimes\bigotimes_{k=1}^\infty \Lambda_{q^n}(\tilde{W}\otimes_\R\C).
\end{align*}
Here \(q\) is a formal variable, \(\tilde{E}\) denotes the reduced vector bundle \(E -\dim E\), \(\Delta(W)\) is the full complex spinor bundle associated to the \(\text{Spin}\)-vector bundle \(W\), and \(\Lambda_t\) (resp. \(S_t\)) denotes the exterior (resp. symmetric) power operation. The tensor products are, if not indicated otherwise, taken over the complex numbers.

After extending \(\ind\) to power series we may define (cf. \cite[Definition 2.1]{0963.19002} and \cite[Definition 3.1]{0944.58019}):

\begin{definition}
  Let \(\varphi^c(M;V,W)\) be the index of the \(\text{Spin}^c\)-Dirac operator twisted with \(R\):
  \begin{equation*}
    \varphi^c(M;V,W)= \ind(\partial_c \otimes R)\in \mathbb{Z}[[q]].
  \end{equation*}
\end{definition}

The Atiyah-Singer index theorem \cite{0164.24301} allows us to calculate 
\begin{equation}
  \label{eq:2}
  \varphi^c(M;V,W)=\langle e^{c_1^c(M)/2}\ch(R)\hat{A}(TM),[M]\rangle.
\end{equation}
Here we have
\begin{equation}
  \label{eq:3}
  \ch(R)=Q_1(TM)Q_2(V)Q_3(W)
\end{equation}
with
\begin{align*}
  Q_1(E)&=\ch(\bigotimes_{k=1}^{\infty} S_{q^k}(\tilde{E}\otimes_\R \C))=\prod_i\prod_{k=1}^\infty \frac{(1-q^k)^2}{(1-e^{x_i}q^k)(1-e^{-x_i}q^k)},\\
  Q_2(V)&=\ch( \Lambda_{-1}(V^*)\otimes \bigotimes_{k=1}^\infty \Lambda_{-q^k}(\tilde{V}\otimes_\R \C))\\ &= \prod_i (1-e^{-v_i})\prod_{k=1}^{\infty} \frac{(1-e^{v_i}q^k)(1-e^{-v_i}q^k)}{(1-q^k)^2},\\
  Q_3(W)&=\ch(\Delta(W)\otimes\bigotimes_{k=1}^\infty \Lambda_{q^n}(\tilde{W}\otimes_\R\C))\\ &=\prod_i(e^{w_i/2}+e^{-w_i/2})\prod_{k=1}^{\infty} \frac{(1+e^{w_i}q^k)(1+e^{-w_i}q^k)}{(1+q^k)^2},
\end{align*}
and
\[\hat{A}(E)=\prod_{i}\frac{x_i}{e^{x_i/2}-e^{-x_i/2}}.\]
Here \(\pm x_i\) (resp. \(v_i\) and \(\pm w_i\)) denote the formal roots of the vector bundle \(E\) (resp. \(V\) and \(W\)).
If \(c_1^c(M)=c_1(V)\) modulo torsion, then we have
\begin{equation}
  \label{eq:4}
  e^{c_1^c(M)/2}Q_2(V)= \frac{e(V)}{\hat{A}(V)}\prod_i\prod_{k=1}^{\infty} \frac{(1-e^{v_i}q^k)(1-e^{-v_i}q^k)}{(1-q^k)^2}=\frac{e(V)}{Q_1(V)\hat{A}(V)}.
\end{equation}

Note that if \(M\) is a \(\text{Spin}\)-manifold,
then there is a canonical \(\text{Spin}^c\)-structure on \(M\) (with \(c_1^c(M)=0\)).
With this \(\text{Spin}^c\)-structure  the elliptic genus of \(M\) is equal to \(\varphi^c(M;0,TM)\).
Moreover, \(\varphi^c(M;0,0)\) is the Witten-genus of \(M\).

Let \(G\) be a compact Lie-group such that:
\begin{enumerate}
\item \label{item:1} There is an exact sequence of Lie-groups
  \begin{equation*}
    1 \rightarrow T \rightarrow G\rightarrow W(G)\rightarrow 1,
  \end{equation*}
where \(T\) is a torus and \(W(G)\) a finite group.
\item\label{item:2} If condition (\ref{item:1}) holds, then \(G\) acts by conjugation on \(T\).
Since \(T\) is abelian this action factors through \(W(G)\). We assume that this action of \(W(G)\) is non-trivial on \(T\).
\end{enumerate}

An action of \(G\) on a manifold \(M\) is called nice if \(G\) acts almost effectively on \(M\) and if the induced action on \(H^*(M;\mathbb{Z})\) is trivial.

For nice \(G\)-actions on \(\text{Spin}^c\)-manifolds we proved the following vanishing result as a generalization of \cite[Theorem 4.4]{0963.19002}.

\begin{theorem}[{\cite[Theorem 2.4]{MR3031643}}]
  \label{sec:introduction}
  Let \(M\) be a closed connected \(\text{Spin}^c\)-manifold on which a compact Lie group \(G\) satisfying the above conditions acts nicely such that \(M^G\neq \emptyset\).
  Let \(V\) and \(W\) be sums of complex line bundles over \(M\) such that \(W\) is \(\text{Spin}\), \(c_1(V)=c_1^c(M)\) modulo torsion and \(p_1(V+W-TM)=0\) modulo torsion. 
Assume that \(b_1(M)=0\) or that the \(G\)-action on \(M\) extends to an action of a simply connected compact Lie-group. 
Then \(\varphi^c(M;V,W)\) vanishes.
\end{theorem}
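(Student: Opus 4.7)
The plan is to follow the Bott--Taubes--Liu rigidity strategy, in the $\text{Spin}^c$ generalization initiated in \cite{0944.58019}. First, I would lift $\varphi^c(M;V,W)$ to a $G$-equivariant index $\varphi^c_G(M;V,W)\in R(G)[[q]]$, using that the nice hypothesis makes the $\text{Spin}^c$-structure, the Dirac operator, and the twisting bundle $R$ all compatibly $G$-equivariant. Restricting to a maximal torus $T\subset G$ gives a function on $T$ whose value at the identity recovers $\varphi^c(M;V,W)$, so it suffices to show that this equivariant character vanishes identically.

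Next I would apply the equivariant Atiyah--Bott--Segal--Singer localization at a generic $t\in T$. Each component $F\subset M^t$ contributes an integral of local characteristic classes of $R$ divided by the equivariant Euler class $\ch_t(\Lambda_{-1}N_F^*)$. The hypothesis $c_1(V)=c_1^c(M)$ modulo torsion allows one, via equation~\eqref{eq:4}, to replace the anomalous factor $e^{c_1^c(M)/2}Q_2(V)$ by $e(V)/(Q_1(V)\hat A(V))$, absorbing the $\text{Spin}^c$-anomaly into the twist; the assumption $b_1(M)=0$ or the extension of the action to a simply connected group is used here to lift this rational identity equivariantly to an integral one, so that no residual torsion correction appears. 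Writing $Q_1,Q_2,Q_3$ in terms of Jacobi theta functions, the whole expression becomes a ratio of theta quotients on $T_\C\times\mathcal{H}$ with $q=e^{2\pi i\tau}$. The hypothesis $p_1(V+W-TM)=0$ modulo torsion is precisely what forces the modular anomalies contributed by the different factors to cancel, and a pole-cancellation check between contributions of different fixed components then shows that the resulting function is holomorphic on $T_\C\times\mathcal{H}$. A Liouville argument on the compact torus $T$ forces it to be constant in the $T$-variable; this is the desired rigidity.

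To conclude vanishing, the rigid character, viewed as an element of $\mathbb{Z}[[q]]$, is a virtual $G$-character constant on $T$ and therefore $W(G)$-invariant; the nontriviality of the Weyl action on $T$ together with the condition $M^G\neq\emptyset$ (which supplies a base point at which the localization becomes a computable sum of global fixed-point contributions) constrains the resulting constant so that, combined with its modular behaviour in $\tau$ as a weakly holomorphic form coming from a Jacobi form of index zero, all $q$-expansion coefficients must vanish. The main obstacle is the rigidity step: in the $\text{Spin}^c$ setting the half-integral twist $e^{c_1^c(M)/2}$ has to be absorbed equivariantly and globally, and controlling the torsion ambiguity in lifting $c_1^c(M)$ and $p_1$ to integral equivariant classes is delicate. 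The hypothesis that $b_1(M)=0$ or that the action extends to a simply connected compact Lie group is exactly what kills this torsion; without it, the pole-cancellation would only give rigidity up to an uncontrolled correction and the subsequent vanishing would fail.
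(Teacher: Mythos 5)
The paper does not prove this theorem; it is quoted verbatim as \cite[Theorem~2.4]{MR3031643}, i.e.\ imported from the author's earlier work, so there is no in-paper proof to compare your sketch against. That said, your sketch does land in the correct general framework: Wiemeler's proof (generalizing Dessai \cite[Theorem~4.4]{0963.19002}) is indeed a Bott--Taubes--Liu--Dessai style argument -- lift to an equivariant index, localize at a generic torus element, reorganize the local data into theta quotients, use the anomaly cancellation forced by $p_1(V+W-TM)=0$ to obtain a holomorphic function on $T_\C\times\mathcal H$, and deduce rigidity. The role you attribute to $b_1(M)=0$ or the extension to a simply connected group (controlling the torsion ambiguity in lifting $c_1^c(M)$, $c_1(V)$, $c_1(W)$ to equivariant classes, so that the action lifts to the $\text{Spin}^c$-structure and to the line bundles) is also essentially correct.

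However, there is a genuine gap in your final step. Rigidity only tells you that the $T$-equivariant character is \emph{constant} on $T$; it does not by itself say that constant is zero. The sentence asserting that Weyl invariance, the existence of a $G$-fixed point, and ``the modular behaviour in $\tau$ as a weakly holomorphic form coming from a Jacobi form of index zero'' together force all $q$-coefficients to vanish is not an argument -- it conflates rigidity (index-zero Jacobi form, hence constant in the group variable) with vanishing (the constant is zero), which are logically distinct and require different inputs. The actual mechanism uses the hypothesis on $G$ in a much sharper way: one studies the isotropy representation of $G$ at a point $p\in M^G$, observes that almost-effectiveness combined with the \emph{non}-triviality of the $W(G)$-action on $T$ forces the weights at $p$ to contain non-$W(G)$-fixed weights, and then analyzes the pole/weight structure of the localized contribution at $p$ (and its $W(G)$-orbit) to conclude the rigid constant must be zero. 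Without this concrete analysis the constant could perfectly well be a nonzero element of $\mathbb{Z}[[q]]$ (and indeed it is for, e.g., elliptic genera under just rigidity, or for Bott manifolds as noted in Remark~\ref{sec:proofs-main-results-3}, where the corresponding statement is false). Also, ``a Liouville argument on the compact torus'' is imprecise: holomorphicity and boundedness are established on the non-compact $T_\C$ (or rather on a fundamental domain for a lattice plus the upper half-plane), using the double quasi-periodicity of the theta quotients, not Liouville on $T$ itself.
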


\section{Proofs of the main results}
\label{sec:proofs-main-results-4}

In this section we prove our main results.
We start with a lemma.

\begin{lemma}
  \label{sec:introduction-1}
  Let \(M\) be a closed \(\text{Spin}^c\)-manifold, \(X\subset M\) a generalized complete intersection, \(V\rightarrow M\) the complex vector bundle from Section~\ref{intro}.
  \begin{enumerate}
  \item\label{item:3} Then there is a \(\text{Spin}^c\) structure on \(X\) with \(c_1^c(X)=c_1^c(M)|_X-c_1(V)|_X\).
  \item\label{item:4}   Assume \(c_1^c(M)=c_1(V)\) modulo torsion and equip \(X\) with the \(\text{Spin}^c\) structure from (\ref{item:3}).
  Then we have
  \[\varphi^c(X;0,0)=\varphi^c(M;V,0).\]
  \end{enumerate}
\end{lemma}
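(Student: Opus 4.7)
The plan is to treat the two parts separately, both by direct bookkeeping with the identities of Section~\ref{sec:textspinc-manifolds}. For part (\ref{item:3}), I would start from the normal bundle identification $\nu(X/M)\cong V|_X$ recorded at the end of Section~\ref{intro}, which gives a stable splitting $TM|_X \cong TX \oplus V|_X$. Because $V$ is a complex vector bundle, it carries a canonical $\text{Spin}^c$-structure with associated first Chern class $c_1(V)$, and so does $V|_X$. The additivity of $\text{Spin}^c$-structures under Whitney sum (two out of three summands with $\text{Spin}^c$-structures determine one on the third) then transfers the restricted $\text{Spin}^c$-structure of $M$ to a $\text{Spin}^c$-structure on $TX$ whose associated first Chern class satisfies $c_1^c(X) = c_1^c(M)|_X - c_1(V)|_X$.

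For part (\ref{item:4}), I would expand both indices through the Atiyah-Singer formula (\ref{eq:2}) and match the resulting characteristic numbers. For $\varphi^c(M;V,0)$ one has $W=0$, so $Q_3\equiv 1$; the hypothesis $c_1^c(M)=c_1(V)$ modulo torsion combined with (\ref{eq:4}) rewrites $e^{c_1^c(M)/2} Q_2(V)$ as $e(V)/(Q_1(V)\hat{A}(V))$. Since $e(V)$ is Poincaré dual to $[X]$, the pairing over $[M]$ passes to a pairing over $[X]$, and the splitting $TM|_X \cong TX \oplus V|_X$ together with the multiplicativity of $Q_1$ and $\hat{A}$ on direct sums cancels all $V|_X$-factors, leaving
\[ \varphi^c(M;V,0) = \langle Q_1(TX)\hat{A}(TX),[X]\rangle. \]
For $\varphi^c(X;0,0)$, part (\ref{item:3}) together with the hypothesis forces $c_1^c(X)$ to be torsion, hence zero in $H^2(X;\Q)$, so $e^{c_1^c(X)/2}=1$ in rational cohomology; applying (\ref{eq:2}) with $V=W=0$ on $X$ then yields the same expression $\langle Q_1(TX)\hat{A}(TX),[X]\rangle$.

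I do not expect any serious obstacle beyond this rearrangement of terms. The only delicate point is that the Chern-class hypothesis holds only modulo torsion, so it must be invoked after passing to rational cohomology; this is harmless because the final pairing with $[X]$ sees only the rational information, and torsion contributions to $c_1^c(X)$ are erased by the exponential expansion when paired with the fundamental class.
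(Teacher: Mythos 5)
Your proof is correct and takes essentially the same route as the paper. For part (2) the computation is identical: expand via equation (\ref{eq:2}) with $W=0$, use (\ref{eq:4}) to replace $e^{c_1^c(M)/2}Q_2(V)$ by $e(V)/(Q_1(V)\hat{A}(V))$, cap with $e(V)$ to move from $[M]$ to $[X]$, cancel the $V$-factors against $TM|_X = TX\oplus V|_X$ by multiplicativity, and match with $\varphi^c(X;0,0)$ after noting $c_1^c(X)$ is torsion; you run the chain in the opposite direction from the paper (starting at $\varphi^c(M;V,0)$ rather than at $\varphi^c(X;0,0)$), but the content is the same, including the observation that torsion in $c_1^c(X)$ is invisible rationally. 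For part (1) there is a small cosmetic difference: the paper simply verifies that $c_1^c(M)|_X-c_1(V)|_X$ reduces mod $2$ to $w_2(X)$ (via the Whitney formula) and then cites the fact from Section~\ref{sec:textspinc-manifolds} that any integral lift of $w_2$ arises from a $\text{Spin}^c$-structure, whereas you invoke the two-out-of-three property of $\text{Spin}^c$-structures on the splitting $TM|_X\cong TX\oplus V|_X$ using the canonical $\text{Spin}^c$-structure on the complex bundle $V|_X$. These are equivalent formulations of the same underlying fact; yours is slightly more structural, the paper's slightly more elementary.
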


\begin{proof}
  \emph{Ad (\ref{item:3}).} This follows from the discussion in the previous section because
  \[w_2(X)=w_2(M)|_X-w_2(V)|_X\equiv c_1^c(M)|_X-c_1(V)|_X\mod 2.\]
  
  \emph{Ad (\ref{item:4}).}   This is a simple calculation. With the notation from the previous section we have:
  \begin{align*}
    \varphi^c(X;0,0)&=\langle Q_1(TX)\hat{A}(TX),[X]\rangle\\
    &=\langle \frac{Q_1(TM)\hat{A}(TM)}{Q_1(V)\hat{A}(V)},[X]\rangle\\
    &=\langle e(V)\frac{Q_1(TM)\hat{A}(TM)}{Q_1(V)\hat{A}(V)},[M]\rangle\\
    &=\varphi^c(M;V,0).
  \end{align*}
  
  Here in the first line we have used equations (\ref{eq:2}), (\ref{eq:3}) and the fact that \(c_1^c(X)=c_1^c(M)|_X-c_1(V)|_X\) is torsion.
  In the second line we used that \(TM|_X=TX\oplus V|_X\) and
that \(Q_1(E\oplus E')=Q_1(E)Q_1(E')\) for vector bundles \(E\) and \(E'\) and similarly for \(\hat{A}\) instead of \(Q_1\).
In the third line we used that the Euler class of \(V\) is the Poincare dual of \(X\).
In the last line we used equations (\ref{eq:2}), (\ref{eq:3}) and (\ref{eq:4}) together with the assumption that \(c_1(V)=c_1^c(M)\) modulo torsion.
\end{proof}

The following theorem is Theorem~\ref{sec:introduction-6} from the introduction.

\begin{theorem}
  \label{sec:proofs-main-results}
  Let \(G\) be a compact simply connected Lie group and \(H\subset G\) a closed subgroup which is not a maximal torus of \(G\) such that \(M=G/H\) is \(\text{Spin}^c\) and \(G\) acts almost effectively on \(M\).

  Let \(X\subset M\) be a generalized complete intersection and \(V\rightarrow M\) the bundle from Section~\ref{intro}.
  Assume that the following holds:
  \begin{enumerate}
  \item \(w_2(V)=w_2(M)\) and
  \item \(p_1(V)=p_1(M)\) modulo torsion.
  \end{enumerate}
  Then the Witten genus of \(X\) vanishes.
\end{theorem}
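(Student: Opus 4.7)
The plan is to reduce the Witten genus of $X$ to a twisted $\text{Spin}^c$-Dirac index on $M$ via Lemma~\ref{sec:introduction-1}, and then deduce its vanishing from Theorem~\ref{sec:introduction} applied to a carefully chosen subgroup of $G$. Since $w_2(V)=w_2(M)$, the integral class $c_1(V)$ reduces to $w_2(M)$ modulo $2$, so $M$ carries a $\text{Spin}^c$-structure with $c_1^c(M)=c_1(V)$; this is the structure I would work with throughout. The two hypotheses then force $w_2(X)=0$ and $p_1(X)=0$ modulo torsion, so $X$ is spin, and its Witten genus equals $\varphi^c(X;0,0)$ computed with the $\text{Spin}^c$-structure supplied by Lemma~\ref{sec:introduction-1}(\ref{item:3}), for which $c_1^c(X)=0$. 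Lemma~\ref{sec:introduction-1}(\ref{item:4}) then yields $\varphi^c(X;0,0)=\varphi^c(M;V,0)$, and the triple $(M,V,W=0)$ satisfies all numerical hypotheses of Theorem~\ref{sec:introduction}: $V$ is a sum of complex line bundles by construction, $W=0$ is trivially spin, $c_1(V)=c_1^c(M)$ by our choice, and $p_1(V)-p_1(M)=0$ modulo torsion by assumption. The hypothesis that the action extends to a simply connected compact Lie group is automatic, since the acting group will sit inside the simply connected $G$.

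The central step is to exhibit a closed subgroup $G_0\subset G$ satisfying conditions (\ref{item:1})--(\ref{item:2}) of Section~\ref{sec:textspinc-manifolds} that acts nicely on $M$ with a non-empty fixed point set. Niceness is essentially automatic: $G$ is connected, so any subgroup acts trivially on $H^*(M;\mathbb Z)$, and almost effectivity of $G_0$ follows from that of $G$ (the ineffective kernel of $G_0$ on $M$ sits inside the finite ineffective kernel of $G$). The fixed-point requirement on $M=G/H$ forces $G_0$ to be conjugate into a subgroup of $H$. In the main case, when $H$ is not a torus, the identity component $H^0$ has non-trivial semisimple part; I would take a maximal torus $T_0\subset H^0$ and set $G_0=N_{H^0}(T_0)$, so that $W_0=G_0/T_0$ is the non-trivial Weyl group of $H^0$ acting non-trivially on $T_0$, while $G_0\subset H$ fixes $eH\in M$. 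Theorem~\ref{sec:introduction} then gives $\varphi^c(M;V,0)=0$, and hence the Witten genus of $X$ vanishes.

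The principal obstacle is the residual case in which $H^0$ is a torus yet $H$ is still not a maximal torus of $G$. Here no subgroup of $H$ can supply a non-trivial Weyl conjugation action on a subtorus of $H^0$, so $G_0$ cannot be found inside $H$ via the recipe above. One must use the full strength of the hypothesis ``$H$ is not a maximal torus''---either by harnessing the finite component group $H/H^0$ when it acts non-trivially on $H^0$, or by invoking Weyl reflections in an ambient maximal torus of $G$ that preserve $H^0$ setwise while acting non-trivially on it---to produce a $G_0$ meeting the structural conditions with a fixed point on a suitable orbit. Working this subcase out carefully is, I expect, the most delicate step of the argument.
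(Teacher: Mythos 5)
Your reduction to $\varphi^c(M;V,0)$ via Lemma~\ref{sec:introduction-1} and the plan to apply Theorem~\ref{sec:introduction} by finding $G_0\subset H$ of the required type are the same first steps as in the paper, and your choice $G_0=N_{H^0}(T_0)$ when $H^0$ is non-abelian is exactly what the paper does in one of its cases. For the subcase where $H^0$ is a maximal torus of $G$ but $H$ has extra components, your idea of ``harnessing the finite component group'' is also correct and is essentially the paper's argument: since a maximal torus of $G$ is its own centralizer, every $h\in H\setminus H^0$ conjugates $H^0$ non-trivially, so $H$ itself already satisfies conditions (\ref{item:1})--(\ref{item:2}) and fixes $eH$.

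The genuine gap is the remaining subcase where $H^0$ is a torus of rank strictly less than $\operatorname{rank}(G)$. There your strategy cannot succeed at all: any candidate $G_0$ with a fixed point must (up to conjugacy) sit inside $H$, and a normal subtorus $T\subset G_0\subset H$ with $H^0$ abelian forces the $W(G_0)$-conjugation on $T$ to factor through inner automorphisms of the abelian group $H^0$, hence to be trivial -- unless $H/H^0$ acts non-trivially on $H^0$, which need not happen (e.g.\ $H$ could itself be a non-maximal torus). Your suggested fallback of using Weyl reflections of an ambient maximal torus of $G$ that preserve $H^0$ does not help either, because such elements generally do not lie in any conjugate of $H$ and therefore have no fixed point on $G/H$, which Theorem~\ref{sec:introduction} requires. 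The paper escapes this via a completely different mechanism that bypasses Theorem~\ref{sec:introduction}: when $H$ does not have maximal rank, one chooses an $S^1\subset G$ acting on $M$ \emph{without} fixed points, lifts the circle action into the $\text{Spin}^c$-structure and into $V$ (using Corollary 1.2 of \cite{0346.57014} and Lemmas 2.1, 2.7 of \cite{MR3031643}), and concludes that the $S^1$-equivariant refinement of $\varphi^c(M;V,0)$ -- and hence the genus itself -- vanishes by the Lefschetz fixed point formula. You would need to add this fixed-point-free circle argument to close the gap.
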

\begin{proof}
  By the condition on \(w_2(V)\), we can fix a \(\text{Spin}^c\) structure on \(M\) such that \(c_1^c(M)=c_1(V)\).
  By Lemma~\ref{sec:introduction-1}, we then have to show that the index \(\varphi^c(M;V,0)\) vanishes.

  When \(H\) does not have maximal rank in \(G\), there is an \(S^1\subset G\) which has no fixed points in \(M\).
  By Corollary 1.2 in \cite{0346.57014} and Lemmas 2.1 and 2.7 in \cite{MR3031643}, the action of this \(S^1\) lifts into the \(\text{Spin}^c\)-structure on \(M\) and the vector bundle \(V\) over \(M\).
  Hence there is an \(S^1\)-equivariant refinement of the index \(\varphi^c(M;V,0)\) which vanishes by the Lefschetz-fixed point formula.
  Therefore the claim follows in this case.

  If the identity component of \(H\) is a maximal torus of \(G\), we can argue as follows.
  Since the centralizer of a maximal torus of \(G\) is the torus itself, it follows that \(H\) is non-abelian and satisfies the assumptions of Theorem~\ref{sec:introduction}.
  Hence the claim follows in this case.

  If the identity component of \(H\) is non-abelian and has maximal rank in \(G\), then we can restrict the \(H\)-action on \(M\) to the normalizer \(N_H(T)\) of a maximal torus \(T\)  in \(H\) and argue as above.
\end{proof}

\begin{remark}
  \label{sec:proofs-main-results-1}
  \begin{itemize}
  \item We do not know whether our conditions on the characteristic classes of \(V\) are always satisfied if \(X\) is string. However they are satisfied in all cases where string (generalized) complete intersections in homogeneous spaces have been studied before.
  \item The situation where \(G/H\) is a irreducible compact Hermitian symmetric space has been studied before by Förster \cite{foerster07:_stolz}. He shows in his Lemmas 4.7, 5.17, 6.7, 8.4, 9.4 that our conditions of the theorem are satisfied in the case that \(X\) is string.
    Our theorem also deals with the cases which were incomplete in Förster's study.
  \item
Corollary 3.4 of \cite{foerster07:_stolz} deals with generalized complete intersections in products of complex projective spaces.
    These results have also been obtained by Chen-Han \cite{MR2373447} and can also be obtained from our result.
  \item The situation where \(G/H\) is a product of complex Grassmannians has been studied by Zhou-Zhuang \cite{MR3275823}. By their Proposition 5.2, their result is also a special case of our result.
  \item The situation where \(G/H\) is a complex flag manifold was studied by Zhuang \cite{MR3546614}. By his Propositions 4.1, 4.2, his result in the string case is also a special case of our result. However he also studied generalized Witten genera for string$^c$ structures on the complete intersections in this case. We do not generalize these results.
  \end{itemize}
\end{remark}

\begin{remark}
\label{sec:proofs-main-results-3}
  The theorem is not true, when \(H\) is a maximal torus of \(G\).
  Indeed, then the tangent bundle of \(M\) splits as a sum of complex line bundles and the Euler-characteristic \(\chi(M)\) is positive.
  Hence one can realize a set of \(\chi(M)\) isolated points in \(M\) as a generalized complete intersection in \(M\) with \(V=TM\).
  But of course the Witten genus of this set does not vanish.
  We do not know whether the theorem does or does not hold, when one adds the extra condition that \(X\) has positive dimension.
\end{remark}

Similarly to Theorem~\ref{sec:proofs-main-results} one can prove the following theorem which is Theorem~\ref{sec:introduction-5} from the introduction.

\begin{theorem}
  \label{sec:introduction-4}
  Let \(M\) be a closed connected \(Spin^c\) manifold such that
  \begin{enumerate}
  \item There is an almost effective action of a simply connected compact Lie group \(G\) on \(M\) such that \(M^{G}\neq \emptyset\), or
  \item \(b_1(M)=0\) and there is an almost effective action of a torus \(T\) on \(M\) with \(\dim T>b_2(M)\).
  \end{enumerate}

  Let, moreover, \(X\subset M\) be a generalized complete intersection and \(V\rightarrow M\) the bundle from Section~\ref{intro}.
  Assume that the following holds:
  \begin{enumerate}
  \item \(w_2(V)=w_2(M)\) and
  \item \(p_1(V)=p_1(M)\) modulo torsion.
  \end{enumerate}
  Then the Witten genus of \(X\) vanishes.
\end{theorem}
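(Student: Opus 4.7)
The plan mirrors the proof of Theorem~\ref{sec:proofs-main-results}: first reduce the Witten genus of $X$ to a twisted $\text{Spin}^c$-index on $M$ via Lemma~\ref{sec:introduction-1}, then apply an ambient vanishing theorem whose choice depends on the case. The hypothesis $w_2(V) = w_2(M)$ lets us pick a $\text{Spin}^c$-structure on $M$ with $c_1^c(M) = c_1(V)$, and Lemma~\ref{sec:introduction-1} then gives $\varphi^c(X;0,0) = \varphi^c(M;V,0)$. Since $V$ is by construction a sum of complex line bundles and $p_1(V) = p_1(M)$ modulo torsion by hypothesis, the bundle data $(V, W=0)$ satisfies the $c_1^c$ and $p_1$ conditions required by the downstream vanishing theorems; only the hypotheses on the ambient group action remain to verify.

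In case~(1) I would invoke Theorem~\ref{sec:introduction}. A non-trivial simply connected compact Lie group is semisimple and non-abelian, hence satisfies conditions~(\ref{item:1}) and~(\ref{item:2}) of Section~\ref{sec:textspinc-manifolds}; because $G$ is connected, its action on $H^*(M;\mathbb{Z})$ is trivial, so the action is nice; and $M^G \neq \emptyset$ is assumed. All hypotheses of Theorem~\ref{sec:introduction} are therefore met and it yields $\varphi^c(M;V,0) = 0$. In case~(2) the acting group is a torus and condition~(\ref{item:2}) fails, so Theorem~\ref{sec:introduction} is not directly available; instead I would invoke the torus-action vanishing result from \cite{MR3599868}, which asserts the vanishing of $\varphi^c(M;V,W)$ for closed connected $\text{Spin}^c$-manifolds $M$ with $b_1(M) = 0$ admitting an almost effective torus action with $\dim T > b_2(M)$, provided $V$ and $W$ are sums of complex line bundles with $W$ spin, $c_1(V) = c_1^c(M)$ modulo torsion and $p_1(V+W-TM) = 0$ modulo torsion. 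These are exactly the hypotheses we have in hand with $W=0$.

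The only delicate point lies in case~(2) and is essentially packaged inside \cite{MR3599868}: one must produce a circle subgroup of $T$ whose action lifts to each line-bundle summand of $V$ and into the chosen $\text{Spin}^c$-structure on $M$, so that an $S^1$-Lefschetz or rigidity argument can be run on $\varphi^c(M;V,0)$. The hypothesis $b_1(M) = 0$ removes the equivariant obstructions to these lifts, while the inequality $\dim T > b_2(M)$ guarantees the existence of a circle whose lifted weight data forces the index to vanish. In case~(1) no such issue arises because $G$ is already simply connected and non-abelian. With the cited theorems in place, Lemma~\ref{sec:introduction-1} then immediately finishes the proof in both cases.
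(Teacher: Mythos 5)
Your reduction via Lemma~\ref{sec:introduction-1} and your handling of case~(2) both match the paper, but case~(1) has a genuine error: you claim a non-trivial simply connected compact Lie group $G$ satisfies conditions~(\ref{item:1}) and~(\ref{item:2}) of Section~\ref{sec:textspinc-manifolds}, so that Theorem~\ref{sec:introduction} applies to $G$ directly. Condition~(\ref{item:1}) requires an exact sequence $1\to T\to G\to W(G)\to 1$ with $T$ a torus and $W(G)$ finite, which forces the identity component of $G$ to be the torus $T$. A non-trivial simply connected compact Lie group is semisimple of positive dimension with non-abelian identity component, so no such normal torus of finite index exists; for instance $SU(2)$ admits no such extension structure. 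Hence $G$ does \emph{not} satisfy the hypotheses of Theorem~\ref{sec:introduction}, and invoking it as you propose is not valid.

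The missing step, which the paper supplies, is to restrict the $G$-action to the normalizer $N_G(T)$ of a maximal torus $T\subset G$. Then $1\to T\to N_G(T)\to W(G)\to 1$ holds with $W(G)$ the Weyl group, which acts non-trivially on $T$ because $G$ is non-abelian, so conditions~(\ref{item:1}) and~(\ref{item:2}) are satisfied by $N_G(T)$. The $N_G(T)$-action is nice because it extends to the connected group $G$ (hence is trivial on $H^*(M;\mathbb{Z})$) and is almost effective; $M^G\subset M^{N_G(T)}$ gives the non-empty fixed-point set; and the fact that the action extends to the simply connected $G$ discharges the final alternative hypothesis of Theorem~\ref{sec:introduction} (no need to assume $b_1(M)=0$ in this case). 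Only after this restriction does Theorem~\ref{sec:introduction} give $\varphi^c(M;V,0)=0$.
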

\begin{proof}
  As in the proof of Theorem~\ref{sec:proofs-main-results}, we fix a \(\text{Spin}^c\)-structure on \(M\) such that \(c_1^c(M)=c_1(V)\).
  Then, by Lemma~\ref{sec:introduction-1}, it suffices to show that \(\varphi^c(M;V,0)\) vanishes.

  If we have an action of a simply connected compact Lie group on \(M\), then we can restrict this action to the normalizer of a maximal torus in \(G\) and deduce the result from Theorem \ref{sec:introduction}.

  If we only have an action of a torus on \(M\), then we can argue as in the proof of Theorem 3.3 of \cite{MR3599868} to deduce the result.
  We omit the details.
\end{proof}

\section{Consequences}
\label{sec:consequences-1}

In this section we discuss applications of our Theorems \ref{sec:proofs-main-results} and \ref{sec:introduction-4}.

As a first consequence of Theorem~\ref{sec:proofs-main-results} we can complete the proof of Förster's vanishing result for Witten genera of string complete intersections in irreducible compact Hermitian symmetric spaces.
The result is as follows.

\begin{cor}
  Let \(M\) be a irreducible compact Hermitian symmetric space and \(X\subset M\) a string complete intersection.
  Then the Witten genus of \(X\) vanishes.
\end{cor}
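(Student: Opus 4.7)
The corollary is a direct application of Theorem~\ref{sec:proofs-main-results}, so the plan is to verify its hypotheses for such an \(M\) and for the bundle \(V\to M\) associated to a string complete intersection \(X\subset M\). First I would realize \(M\) as a homogeneous space \(G/H\) with \(G\) a compact simply connected simple Lie group and \(H\) the identity component of the centralizer of a circle subgroup in \(G\); this is the standard presentation of an irreducible compact Hermitian symmetric space. In this presentation \(G\) acts almost effectively, and \(H\) contains a nonabelian semisimple factor, so \(H\) is not a maximal torus of \(G\). Since \(M\) is K\"ahler and hence unitary, it carries a \(\text{Spin}^c\)-structure. This confirms the global hypotheses of Theorem~\ref{sec:proofs-main-results}.

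The heart of the proof is to verify the two conditions on \(V\to M\), namely \(w_2(V)=w_2(M)\) and \(p_1(V)=p_1(M)\) modulo torsion. The adjunction \(TM|_X=TX\oplus V|_X\) combined with the Whitney sum formula and the string hypothesis (\(w_2(X)=0\) and \(p_1(X)=0\) modulo torsion) immediately gives these identities after restriction to \(X\). To lift them back to \(M\) itself I would invoke the iterated Lefschetz hyperplane theorem, which makes \(H^i(M;\Lambda)\to H^i(X;\Lambda)\) injective for \(i<\dim_{\mathbb{C}}X\) and any coefficient ring \(\Lambda\); for the residual low-dimensional cases one exploits that \(H^2(M;\mathbb{Z})\) and \(H^4(M;\mathbb{Z})\) have small rank for irreducible compact Hermitian symmetric spaces, reducing the two conditions to explicit numerical identities in terms of the line-bundle degrees defining \(V\).

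The main obstacle is the case-by-case cohomological verification of (1) and (2) across the six types of irreducible compact Hermitian symmetric spaces (the classical series \(SU(p+q)/S(U(p)\times U(q))\), \(SO(2n)/U(n)\), \(Sp(n)/U(n)\), \(SO(p+2)/(SO(p)\times SO(2))\), together with the two exceptional spaces). Fortunately this work has already been carried out by F\"orster in \cite{foerster07:_stolz}, Lemmas 4.7, 5.17, 6.7, 8.4, 9.4, whose conclusions are exactly our conditions (1) and (2). Citing these lemmas and then invoking Theorem~\ref{sec:proofs-main-results} yields the vanishing of the Witten genus of \(X\), thereby completing precisely those cases in which F\"orster's own vanishing argument was left incomplete.
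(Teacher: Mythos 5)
Your proposal is correct and takes essentially the same route as the paper: the paper's proof of this corollary is a one-liner that cites Theorem~\ref{sec:proofs-main-results} together with F\"orster's Lemmas 4.7, 5.17, 6.7, 8.4, 9.4 (as listed in Remark~\ref{sec:proofs-main-results-1}), which is exactly the structure of your argument. The only small caveat is your assertion that \(H\) always contains a nonabelian semisimple factor: this fails for the rank-one case \(\mathbb{C}P^1 = SU(2)/S(U(1)\times U(1))\), where \(H\) is a maximal torus and Theorem~\ref{sec:proofs-main-results} does not apply (cf.\ Remark~\ref{sec:proofs-main-results-3}); but the paper's proof is equally silent on this degenerate case, so your proposal is faithful to its approach.
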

\begin{proof}
  This follows from Theorem \ref{sec:proofs-main-results} and the lemmas from \cite{foerster07:_stolz} mentioned in Remark~\ref{sec:proofs-main-results-1}.
\end{proof}

To give a second application for our theorems, we have to recall the definition of (generalized) Bott manifolds.
This is done inductively:

A single point is the only \(0\)-stage (generalized) Bott manifold.
A \(k\)-stage generalized Bott manifold, \(k\geq 1\), is the total space of a \(\mathbb{C} P^{n_k}\)-bundle, \(n_k\geq 1\), with structure group a torus acting lienarly on the fiber over an \((k-1)\)-stage generalized Bott manifold.
A \(k\)-stage Bott manifold, \(k\geq 1\), is the total space of a \(\mathbb{C} P^1\)-bundle with structure group a torus acting linearly on the fiber over an \((k-1)\)-stage Bott manifold.

All (generalized) Bott manifolds are unitary, simply connected and examples of so-called torus manifolds.
A torus manifold \(M\) is a closed connected oriented manifold of real dimension \(2n\) with an effective action of a compact torus \(T\)  of dimension \(n\) such that \(M^T\neq \emptyset\).
Note that for a \(k\)-stage generalized Bott manifold \(M\) we have
\[b_2(M)=k\leq \sum_{i=1}^k n_i=\frac{1}{2}\dim M\]
with equality if and only if \(M\) is a Bott manifold.

So Theorem~\ref{sec:introduction-4}, in particular, shows the following (generalizing a result from \cite{han2023iterated}):

\begin{cor}
\label{sec:proofs-main-results-2}
Let \(M\) be
\begin{enumerate}
\item a generalized Bott manifold which is not a Bott manifold or more generally
\item  a \(\text{Spin}^c\) torus manifold with \(b_1(M)=0\) and \(b_2(M)<\frac{1}{2}\dim M\).
\end{enumerate}

    Let moreover \(X\subset M\) be a generalized complete intersection and \(V\rightarrow M\) the bundle from Section~\ref{intro}.
  Assume that the following holds:
  \begin{enumerate}
  \item \(w_2(V)=w_2(M)\) and
  \item \(p_1(V)=p_1(M)\) modulo torsion.
  \end{enumerate}
  Then the Witten genus of \(X\) vanishes.
\end{cor}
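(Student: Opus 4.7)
The strategy is to deduce both parts of the corollary as special cases of Theorem~\ref{sec:introduction-4}(2). The conditions on $V$ (namely $w_2(V)=w_2(M)$ and $p_1(V)=p_1(M)$ modulo torsion) are identical in the corollary and the theorem, so nothing needs to be done there. What must be verified is that the ambient manifold $M$ in each of the two cases satisfies the hypothesis of Theorem~\ref{sec:introduction-4}(2): a closed connected $\text{Spin}^c$-manifold with $b_1(M)=0$ admitting an almost effective torus action whose torus has dimension strictly greater than $b_2(M)$.

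For case (2), this is immediate from the definition of a torus manifold: the acting torus $T$ has dimension $n=\tfrac{1}{2}\dim M$ and by assumption $\dim T=n>b_2(M)$, while the remaining hypotheses ($\text{Spin}^c$, $b_1(M)=0$) are given. Case (1) is then a matter of checking that a generalized Bott manifold which is not a Bott manifold fits into case (2). First, every generalized Bott manifold is unitary, hence $\text{Spin}^c$. Second, it is simply connected, so $b_1(M)=0$. Third, being a torus manifold it admits an effective action of a torus $T$ of dimension $\tfrac{1}{2}\dim M=\sum_{i=1}^k n_i$. Finally, the inequality recalled just before the corollary,
\[
b_2(M)=k\leq \sum_{i=1}^k n_i=\tfrac{1}{2}\dim M,
\]
is an equality precisely when $M$ is a Bott manifold; since we exclude that case, we have $b_2(M)<\dim T$, which is exactly the torus condition of Theorem~\ref{sec:introduction-4}(2).

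Having verified the hypotheses, Theorem~\ref{sec:introduction-4} directly yields the vanishing of the Witten genus of $X$ in both situations. There is essentially no obstacle here: the work has already been done in establishing Theorem~\ref{sec:introduction-4}, and the role of the corollary is only to recognize that the classes of manifolds it addresses all satisfy the torus-action hypothesis with enough room ($\dim T>b_2(M)$) for that theorem to apply. The only mildly non-trivial point is the identification of $\dim T$ and the inequality $b_2(M)<\tfrac{1}{2}\dim M$ for non-Bott generalized Bott manifolds, which is recorded in the paper immediately before the statement and so can be cited without further argument.
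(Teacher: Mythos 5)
Your proof is correct and follows the same reasoning the paper intends: the paper gives no formal proof, but the observations about generalized Bott manifolds (unitary hence $\text{Spin}^c$, simply connected, torus manifolds with $b_2(M)=k<\frac{1}{2}\dim M=\dim T$ when not a Bott manifold) are stated immediately before the corollary precisely so that Theorem~\ref{sec:introduction-4}(2) applies. You have correctly verified all the hypotheses of that theorem in both cases.
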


\begin{remark}
 The example \(M=\prod \mathbb{C} P^1=\prod SU(2)/S^1\) shows that the corollary is not true when \(M\) is a Bott manifold (see Remark~\ref{sec:proofs-main-results-3}).
 Note that more general examples are given in Lemma 5.3 of \cite{MR3031643}.
 These examples show that the corollary does not hold for any Bott manifold.
\end{remark}

 \begin{cor}
   \label{sec:consequences}
   Let \(M\) be as in Theorem
   \ref{sec:introduction-4} and \(X\subset M\) a generalized complete intersection which is string such that
   \begin{enumerate}
   \item \(X\hookrightarrow M\) is four-connected, or, more generally,
   \item \(H^2(M;\mathbb{Z}_2)\rightarrow H^2(X;\mathbb{Z}_2)\) and \(H^4(M;\mathbb{Q})\rightarrow H^4(X;\mathbb{Q})\) are injective.
   \end{enumerate}
   Then the Witten genus of \(X\) vanishes.
 \end{cor}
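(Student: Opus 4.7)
The plan is to reduce everything to Theorem~\ref{sec:introduction-4} by verifying its two characteristic-class hypotheses on $V\to M$, using that $X$ is string together with the (co)homological injectivity assumed in (2), while deriving (2) from (1) via standard connectivity arguments.

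First I would show that condition (1) implies condition (2). If the inclusion $X\hookrightarrow M$ is four-connected, then $H_i(M,X)=0$ for $i\leq 4$ by the relative Hurewicz theorem, and the long exact sequences of the pair with $\mathbb{Z}_2$ and $\mathbb{Q}$ coefficients give that $H^i(M;A)\to H^i(X;A)$ is an isomorphism for $i<4$ and injective for $i=4$, for $A=\mathbb{Z}_2$ or $\mathbb{Q}$. This yields the two injectivity statements in (2), so it suffices to handle case (2).

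Next I would exploit the normal bundle decomposition $TM|_X=TX\oplus V|_X$. Since $X$ is string, $w_2(X)=0$ and $p_1(X)=0$ (the latter because string means $\tfrac{p_1}{2}(X)=0$ integrally, hence $p_1(X)=0$). Therefore
\begin{equation*}
(w_2(M)-w_2(V))|_X = w_2(X)=0\quad\text{in } H^2(X;\mathbb{Z}_2),
\end{equation*}
\begin{equation*}
(p_1(M)-p_1(V))|_X = p_1(X)=0\quad\text{in } H^4(X;\mathbb{Z}).
\end{equation*}
By the injectivity of $H^2(M;\mathbb{Z}_2)\to H^2(X;\mathbb{Z}_2)$ the first identity implies $w_2(V)=w_2(M)$ on $M$. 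For the second, passing to rational coefficients makes the class $p_1(M)-p_1(V)$ vanish in $H^4(X;\mathbb{Q})$, and injectivity of $H^4(M;\mathbb{Q})\to H^4(X;\mathbb{Q})$ then forces $p_1(V)=p_1(M)$ in $H^4(M;\mathbb{Q})$, i.e.\ modulo torsion.

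Having established the two hypotheses of Theorem~\ref{sec:introduction-4} on $M$ and $V$, I would then invoke that theorem directly to conclude that the Witten genus of $X$ vanishes. No further analysis of $X$ itself is required; all of the work has been moved upstairs to $M$. There is no significant obstacle here: the argument is essentially a bookkeeping reduction, with the only mild subtlety being the distinction between integral and rational characteristic classes when transferring the equality of $p_1$ across the inclusion.
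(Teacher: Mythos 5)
Your proposal is correct and follows essentially the same route as the paper: restrict the Whitney-sum identities for $w_2$ and $p_1$ to $X$, use that $X$ string gives $w_2(X)=0$ and $p_1(X)=0$, then pull the equalities back to $M$ via the injectivity in (2) and feed them into Theorem~\ref{sec:introduction-4}. The paper's proof is more terse (it omits the standard reduction of (1) to (2) and the explicit appeal to the injectivity), but the substance is identical.
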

 \begin{proof}
   If \(X\) is string, then we have \(w_2(M)|_X-w_2(V)|_X=w_2(X)=0\) and \(p_1(M)|_X-p_1(V)|_X=p_1(X)=0\).
   Hence the corollary follows.
 \end{proof}

We now discuss two corollaries of Theorem~\ref{sec:introduction-4} which give new evidence for the Stolz conjecture.

 Let \(M\) be a Riemannian manifold.
 Positive \(k\)-th intermediate Ricci curvature for \(1\leq k\leq \dim M -1\) is a curvature condition which interpolates between positive sectional curvature and positive Ricci curvature.
 \(M\) satisfies this condition if for all \(x\in M\) and all orthonormal \(k+1\)-frames \( \{u,e_1,\dots,e_k\}\subset T_xM\) one has \(\sum_{i=1}^k K(u,e_i)>0\), where \(K(u,e_i)\) denotes the sectional curvature of the plane spanned by \(u\) and \(e_i\).

 So the case \(k=1\) is precisely the case of positive sectional curvature and \(k=\dim M-1\) is positive Ricci curvature.
 Note that if \(M\) has positive \(k\)-th intermediate Ricci curvature, it also has positive \(k'\)-th intermediate Ricci curvature for all \(k\leq k'\leq \dim M-1\).

 Together with Wilking's connectedness lemma \cite{MR2051400} for intermediate Ricci curvature, the above corollary implies the following:

 \begin{cor}
   Let \(M\) be as in Theorem~\ref{sec:introduction-4}.
   Assume that there is a Riemannian metric on \(M\) with positive \(k\)-th intermediate Ricci curvature.
   Let \(X\subset M\) be a generalized complete intersection such that all the codimension two submanifolds defining \(X\) are totally geodesic with respect to the above metric.
   If
   \begin{enumerate}
   \item \(1\leq k\leq \min \{\dim M - 6, \dim X -3\}\) and
   \item \(X\) is string,
   \end{enumerate}
   then the induced metric on \(X\) has positive Ricci curvature and the Witten genus of \(X\) vanishes.
 \end{cor}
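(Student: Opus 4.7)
The plan is to establish both conclusions---positive Ricci curvature on $X$ and vanishing of its Witten genus---using the observation that $X$ is itself a closed totally geodesic submanifold of $M$. Indeed, if $Y,Z \subset M$ are totally geodesic and intersect transversely, then any vector in $T_p(Y \cap Z) = T_pY \cap T_pZ$ exponentiates to a geodesic of $M$ lying in both $Y$ and $Z$, hence in $Y \cap Z$, so $Y \cap Z$ is totally geodesic in $M$. Iterating, and setting $Y_i = X_1 \cap \cdots \cap X_i$, I obtain a tower $M = Y_0 \supset Y_1 \supset \cdots \supset Y_l = X$ in which each inclusion $Y_i \hookrightarrow Y_{i-1}$ is totally geodesic of codimension two.

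For the Ricci claim, the Gauss equation combined with the total geodesy of $X$ shows that the sectional curvatures of the induced metric on $X$ coincide with the corresponding sectional curvatures of $M$ on $2$-planes in $TX$. Hence the positive $k$-th intermediate Ricci hypothesis on $M$ restricts to the same condition on $X$. Combined with the monotonicity principle already recorded in the excerpt, and using that $k \leq \dim X - 3 \leq \dim X - 1$, this yields positive $k'$-th intermediate Ricci on $X$ for every $k \leq k' \leq \dim X - 1$, and in particular positive Ricci curvature.

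For the Witten genus, I would invoke Corollary~\ref{sec:consequences}(2), so it suffices to verify that $H^2(M;\mathbb{Z}_2) \to H^2(X;\mathbb{Z}_2)$ and $H^4(M;\mathbb{Q}) \to H^4(X;\mathbb{Q})$ are injective. The plan is to apply Wilking's connectedness lemma for positive $k$-th intermediate Ricci curvature~\cite{MR2051400} to each step $Y_i \hookrightarrow Y_{i-1}$ of the tower: each intermediate $Y_{i-1}$ is totally geodesic in $M$ and so inherits $\mathrm{Ric}_k > 0$, and $Y_i$ sits in $Y_{i-1}$ as a totally geodesic codimension-two submanifold, so the lemma yields a connectedness bound depending on $\dim Y_{i-1}$ and $k$. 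Composing these bounds over the tower gives a connectedness estimate for $X \hookrightarrow M$, and the two hypotheses $k \leq \dim M - 6$ and $k \leq \dim X - 3$ are calibrated to control the two extremes (the first step via $\dim Y_0 = \dim M$, the last step via $\dim Y_{l-1} = \dim X + 2$), making the composition sufficiently highly connected to force both cohomology injections.

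The main technical point will be matching the precise numerical output of Wilking's intermediate Ricci connectedness lemma with these two hypotheses on $k$; once this bookkeeping is done, Corollary~\ref{sec:consequences} closes the argument and the curvature statement above finishes the Ricci claim.
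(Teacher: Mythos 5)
Your overall strategy matches the paper's: build the tower \(M=Y_0\supset Y_1\supset\cdots\supset Y_l=X\) of totally geodesic codimension-two steps, observe that a totally geodesic submanifold of a manifold with \(\mathrm{Ric}_k>0\) again has \(\mathrm{Ric}_k>0\) (hence positive Ricci since \(k<\dim X-1\)), and obtain the Witten-genus vanishing by establishing enough connectedness of \(X\hookrightarrow M\) to invoke Corollary~\ref{sec:consequences}. The Ricci part of your argument is correct and is essentially what the paper does.

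There is, however, a genuine gap in the connectedness part, and it is exactly the ``bookkeeping'' you postpone. You propose applying the \emph{single-submanifold} form of Wilking's connectedness lemma step by step, with the ambient manifold at stage \(i\) being \(Y_{i-1}\). That version (Remark 2.4 in \cite{MR2051400}) says that a totally geodesic \(N^{n-c}\) in an \(n\)-manifold with \(\mathrm{Ric}_k>0\) includes \((n-2c-k+2)\)-connectedly. For \(Y_l=X\hookrightarrow Y_{l-1}\) with \(\dim Y_{l-1}=\dim X+2\) and \(c=2\), this gives only \((\dim X-k)\)-connectedness; under the stated hypothesis \(k\le\dim X-3\) this is merely \(3\)-connected, so the composite \(X\hookrightarrow M\) would only be \(3\)-connected and the needed injectivity of \(H^4(M;\mathbb{Q})\to H^4(X;\mathbb{Q})\) does not follow. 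Your approach would require the stronger hypothesis \(k\le\dim X-4\). The paper avoids this by using the \emph{two-submanifold intersection} form of Wilking's lemma (also part of Remark 2.4): both \(X_{j+1}\) and \(\bigcap_{i\le j}X_i\) are totally geodesic in \(M\) itself with \(\mathrm{Ric}_k>0\), and the intersection version gives that \(\bigcap_{i\le j+1}X_i\hookrightarrow \bigcap_{i\le j}X_i\) is \((\dim M-2-2j-k+1)=(\dim Y_{j+1}-k+1)\)-connected. In the worst case \(j=l-1\) this is \((\dim X-k+1)\ge 4\) precisely when \(k\le\dim X-3\), and the first step \(X_1\hookrightarrow M\) via the single-submanifold version is \(4\)-connected precisely when \(k\le\dim M-6\). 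Those are the two hypotheses in the statement, and the intersection version supplies the extra degree of connectedness that your step-by-step use of the single-submanifold version misses.
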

 \begin{proof}
   The first claim follows because \(X\) as a totally geodesic submanifold of \(M\) has positive \(k\)-th intermediate Ricci curvature and \(k<\dim X-1\).

   To prove the second claim let \(X_1,\dots,X_l\) be the codimension two submanifolds defining \(X\).
   It follows from Remark 2.4 in \cite{MR2051400} that the inclusions
   \begin{align*}
     X_1&\hookrightarrow M, & \bigcap_{i=1}^{j+1}X_i&\hookrightarrow \bigcap_{i=1}^{j}X_i,
   \end{align*}
for \(j=1,\dots,l-1\) are four-connected.
   Hence the second claim is implied by Corollary~\ref{sec:consequences}.
 \end{proof}

 Our other corollary is for Fano manifolds.
We need the following two lemmas to prove it.
 For a K\"ahler manifold \(M\), we denote by \(h^{p,q}(M)\) the Hodge numbers of \(M\). 
 
\begin{lemma}
  \label{sec:compl-inters-kahl}
  Let \(M\) be a simply connected compact Kähler manifold with \(h^{1,1}(M)=1\) and \(X\subset M\) a complete intersection of complex dimension at least \(4\).
  Then the inclusion \(X\hookrightarrow M\) is four-connected or \(X\) is empty.
\end{lemma}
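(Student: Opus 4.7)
The strategy is to iterate the Lefschetz hyperplane theorem across the defining hypersurfaces of $X$. The first task is to show that either $X$ is empty or the defining line bundles $L_1,\dots,L_k$ are all ample on a projective manifold $M$, so that a genuine ampleness-based Lefschetz result can be applied.

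Since $M$ is simply connected and K\"ahler, $H^1(M;\mathbb{Z})=0$ and Hodge symmetry gives $H^{0,1}(M)=0$, so the exponential sheaf sequence identifies $\text{Pic}(M)$ with the N\'eron-Severi group $H^2(M;\mathbb{Z})\cap H^{1,1}(M)$; moreover $H^2(M;\mathbb{Z})$ is torsion-free by the universal coefficient theorem. The hypothesis $h^{1,1}(M)=1$ therefore forces $\text{Pic}(M)$ to be free of rank at most one. If the rank is zero, every $L_i$ is trivial, and since a global section of the trivial bundle on the compact complex manifold $M$ is a constant, a generic section has empty zero locus, so $X$ is empty. If the rank is one, I would choose a generator $L$ whose first Chern class lies in the closure of the K\"ahler cone; by Kodaira's embedding theorem $L$ is then ample and $M$ is projective. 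Each $L_i$ is then a tensor power $L^{\otimes a_i}$ with $a_i\in\mathbb{Z}$, and since $L^{\otimes a_i}$ has no nonzero global sections for $a_i<0$, base-point-freeness forces $a_i\geq 0$; if some $a_i=0$ we are again in the empty case, so we may assume every $L_i$ is ample.

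Now I would set $Y_0=M$ and $Y_j=X_1\cap\cdots\cap X_j$ for $j=1,\dots,k$, so $Y_k=X$. The transversality built into the definition of a complete intersection in Section~\ref{intro} ensures that every $Y_j$ is a smooth projective subvariety of $M$, and $Y_{j+1}\subset Y_j$ is cut out by the restriction to $Y_j$ of a generic section of the ample line bundle $L_{j+1}$. The homotopical form of the Lefschetz hyperplane theorem (Andreotti-Frankel, Bott) then gives $\pi_i(Y_j,Y_{j+1})=0$ for $i\leq\dim_{\mathbb{C}} Y_{j+1}$, that is, $Y_{j+1}\hookrightarrow Y_j$ is $\dim_{\mathbb{C}} Y_{j+1}$-connected. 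Since $\dim_{\mathbb{C}} Y_{j+1}\geq\dim_{\mathbb{C}} X\geq 4$ for every $j$, each step is at least four-connected, and composing yields that $X=Y_k\hookrightarrow Y_0=M$ is four-connected.

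The main obstacle is the reduction to the projective, ample setting: the definition of complete intersection only requires base-point-freeness on an a priori merely K\"ahler manifold, and it is precisely the Picard-rank-one consequence of $h^{1,1}=1$ that allows one to rule out trivial or anti-ample line bundles and apply the classical Lefschetz theorem directly. Once that reduction is in place, the rest is a routine chain of four-connected inclusions.
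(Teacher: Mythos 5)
Your proof is correct and follows essentially the same strategy as the paper's: use $h^{1,1}(M)=1$ to show that, unless $X$ is empty, all the defining line bundles $L_i$ are positive, and then iterate the homotopical Lefschetz hyperplane theorem across the defining hypersurfaces. The paper argues directly with the real $(1,1)$-classes $c_1(L_i)$ rather than passing through a generator of the Picard group of $M$, and when you invoke Kodaira embedding you should say that $c_1(L)$ lies in the K\"ahler cone itself rather than merely in its closure (which is automatic here since $c_1(L)\neq 0$ and $h^{1,1}=1$), but these are inessential variations.
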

\begin{proof}
  Let \[L_1,\dots, L_k\] be the holomorphic line bundles defining \(X\) and \[X_1,\dots,X_k\] the complex hypersurfaces of \(M\) such that \(X=\bigcap_{i=1}^k X_i\).

  If one of the first Chern classes of the \(L_i\) is zero, then \(X_i\) is empty. Therefore \(X=\emptyset\) in this case.
  
  Hence we can assume that all \(c_1(L_i)\) are non-zero.
  Since \(h^{1,1}(M)=1\) and \(M\) is Kähler, each \(L_i\) must be either negative or positive.
  Since negative line bundles do not have non-zero holomorphic sections, all the \(L_i\) are positive.

  Hence, it follows from the Lefschetz hyperplane theorem (see \cite{MR0215323}) and the assumption \(\dim_{\mathbb{C}} X\geq 4\) that all the inclusions
  \[\bigcap_{i=1}^{j+1} X_i\hookrightarrow \bigcap_{i=1}^{j} X_i,\]
  \(j=0,\dots,k-1\), are four-connected.

  Hence the claim follows.
\end{proof}

\begin{lemma}
  \label{sec:consequences-2}
  Let \(M\) be a Fano manifold with \(b_2(M)=1\) and \(\dim_\mathbb{C} M=n\) and \(X\subset M\) a string complete intersection with \(\dim_{\mathbb{C}} X\geq 4\).
  Then \(X\) is Fano.
\end{lemma}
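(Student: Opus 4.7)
The plan is to reduce the Fano property of $X$ to a numerical inequality $\sum a_i < d$ (with $d$ the Fano index and $a_i$ the degrees of the line bundles defining $X$) and then to extract that inequality from the string condition together with the cited bound on $c_2(M)$. First I would fix notation: since $M$ is Fano and simply connected with $b_2(M)=1$, Hurewicz and universal coefficients give $H^2(M;\mathbb Z)\cong\mathbb Z\cdot h$ for an ample generator $h$, so $c_1(M)=d\,h$ and each defining line bundle satisfies $c_1(L_i)=a_i\,h$ with $d,a_i\in\mathbb Z_{>0}$ (the positivity of the $L_i$ being exactly the argument used in the proof of Lemma~\ref{sec:compl-inters-kahl}). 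Writing $V=\bigoplus_i L_i$ and $A=\sum_i a_i$, adjunction gives $c_1(X)=(d-A)\,h|_X$, and $h|_X$ remains ample by Lefschetz; thus $X$ is Fano exactly when $A<d$. Note also that $k=n-\dim_{\mathbb C}X\le n-4$.

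Next I would extract a numerical identity from the string assumption. Whitney applied to $TM|_X=TX\oplus V|_X$ gives $p_1(M)|_X - p_1(V)|_X=p_1(X)=0$ rationally. By Lemma~\ref{sec:compl-inters-kahl} the inclusion $X\hookrightarrow M$ is four-connected, so $H^4(M;\mathbb Q)\to H^4(X;\mathbb Q)$ is injective and this identity lifts to $p_1(M)=p_1(V)$ in $H^4(M;\mathbb Q)$. A direct splitting-principle computation gives $p_1(V)=B\,h^2$ with $B:=\sum_i a_i^2$, and combined with $p_1(M)=c_1(M)^2-2c_2(M)=d^2 h^2-2c_2(M)$ this forces the proportionality
\[
c_2(M)=\tfrac{d^2-B}{2}\,h^2\quad\text{in }H^4(M;\mathbb Q),
\]
so in particular $c_2(M)\cdot h^{n-2}=\tfrac{d^2-B}{2}\,h^n$.

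The last step plugs this into the Miyaoka--Yau-type lower bound on $c_2(M)\cdot c_1(M)^{n-2}$ in terms of $c_1(M)^n$ supplied by \cite{MR4000253} for smooth Fano manifolds with $b_2=1$. After an elementary rearrangement this translates into an inequality of the shape $B\le d^2/(n+1)$. Applying Cauchy--Schwarz $A^2\le k\cdot B$ together with $k\le n-4$ then yields
\[
A\le d\sqrt{k/(n+1)}\le d\sqrt{(n-4)/(n+1)}<d,
\]
so $c_1(X)=(d-A)h|_X$ is a positive multiple of the ample class $h|_X$ and $X$ is Fano, as required.

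The main obstacle is correctly quoting the inequality from \cite{MR4000253} and checking that it applies to every smooth Fano manifold with $b_2=1$ (and not merely, say, to those admitting a K\"ahler--Einstein metric, for which Miyaoka--Yau follows from Yau's theorem); once that bound is in hand the rest of the argument is just adjunction, the four-connectedness of the inclusion, and Cauchy--Schwarz.
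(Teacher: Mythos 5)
Your overall strategy matches the paper's: reduce the Fano property of \(X\) to the numerical inequality \(\sum a_i < d\), use the string condition together with the four-connectedness of \(X\hookrightarrow M\) to identify \(c_2(M)\) as \(\tfrac{d^2-B}{2}\,h^2\) with \(B=\sum a_i^2\), and then apply a lower bound on \(c_2(M)\cdot h^{n-2}\) from \cite{MR4000253}. However, there is a concrete gap in the last step: you attribute to \cite{MR4000253} a Miyaoka--Yau--type bound yielding \(B\le d^2/(n+1)\). That bound is the classical Miyaoka--Yau inequality and, as you yourself worry at the end, it is only known to hold for K\"ahler--Einstein Fano manifolds (or other restricted classes); it is not established for arbitrary Fano manifolds with \(b_2=1\). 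The actual bound supplied by \cite{MR4000253}, and the one the paper uses, is
\[
\langle c_2(M)\,h^{n-2},[M]\rangle \;\geq\; \frac{d(d-1)}{2}\,\langle h^{n},[M]\rangle,
\]
with equality only if \(d\geq n-1\). Plugging in \(c_2(M)=\tfrac{d^2-B}{2}h^2\) this gives only \(B\leq d\), not \(B\leq d^2/(n+1)\).

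With the correct, weaker bound \(B\leq d\), your Cauchy--Schwarz step \(A^2\leq kB\leq kd\leq (n-4)d\) does not force \(A<d\): that would require \(d>n-4\), which need not hold (the Fano index can be as small as \(1\)). The argument that actually closes the gap is simpler than Cauchy--Schwarz: since each \(a_i\geq 1\), one has \(A=\sum a_i\leq\sum a_i^2=B\leq d\), with equality in \(A\leq B\) iff all \(a_i=1\), and equality in \(B\leq d\) forcing \(d\geq n-1\). If both equalities held, then \(d=B=A=k\leq n-4\), contradicting \(d\geq n-1\). Hence \(A<d\), i.e., \(X\) is Fano. So the fix is to replace the Miyaoka--Yau estimate and Cauchy--Schwarz by the correct Liu bound and this elementary equality analysis.
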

\begin{proof}
  Let \(u\in H^2(M;\mathbb{Z})\) be the positive generator, \(l_1,\dots,l_k\in \mathbb{Z}\) and \(m\in \mathbb{Z}\) such that
  \begin{align*}
    c_1(L_i)&=l_i u,&c_1(M)&=mu.
  \end{align*}
  Here the \(L_i\) are the line bundles which define \(X\).
  As shown in the proof of Lemma~\ref{sec:compl-inters-kahl}, we can assume \(m,l_i>0\) for all \(i\).

  We have to show that \(m-\sum_{i=1}^k l_i\) is positive.
  Since \(X\hookrightarrow M\) is four-connected by Lemma~\ref{sec:compl-inters-kahl} and \(0=p_1(X)=p_1(M)|_X-p_1(V)|_X\), we have
  \[m^2u^2-2c_2(M)=c_1(M)^2-2c_2(M)=p_1(M)=p_1(V)=\sum_{i=1}^k l_i^2 u^2\]
  and, therefore,
  \[2c_2(M)=(m^2-\sum_{i=1}^k l_i^2) u^2.\]

  By the main result of \cite{MR4000253} we have
  \[\langle c_2(M)u^{n-2},[M]\rangle\geq \frac{m(m-1)}{2}\langle u^n,[M]\rangle,\]
  with equality only if \(m\geq n-1\) (for this extension see the last paragraph of the proof in \cite{MR4000253}).
  Since \(\langle u^n,[M]\rangle>0\), it follows that \[m\geq \sum_{i=1}^k l_i^2\geq \sum_{i=1}^k l_i.\]
  Here in the first inequality we have strict inequality whenever \(m< n-1\).
  Moreover in the second inequality we have equality if and only if all \(l_i\) are equal to one.
  Hence we have \(m>\sum_{i=1}^k l_i\) unless \(k=m\geq n-1\).
  But this exceptional case is excluded by our assumption on the dimension of \(X\).
  So the lemma is proved.
\end{proof}

By a combination of Lemmas~\ref{sec:compl-inters-kahl}, \ref{sec:consequences-2} and Corollary~\ref{sec:consequences} we get the following corollary which is Corollary~\ref{sec:introduction-7} in the introduction.

\begin{cor}
  \label{sec:consequences-3}
    Let \(M\) be a Fano manifold with \(b_2(M)=1\). Assume that there is a smooth effective action of a torus \(T\) with \(\dim T=2\) on \(M\) or that there is a smooth almost effective action of a simply connected compact Lie group on \(M\) with a fixed point.
  
  Let, moreover, \(X\subset M\) be a string complete intersection of complex dimension at least four.

  Then \(X\) is Fano and the Witten genus of \(X\) vanishes.
\end{cor}

We say that a \(2n\)-dimensional closed manifold \(M\) satisfies the Hard Lefschetz property if there is a \(u\in H^2(M;\mathbb{R})\) such that for all \(k=0,\dots,n\) the cup product with \(u^{n-k}\) induces an isomorphism
\[H^k(M;\mathbb{R})\rightarrow H^{2n-k}(M;\mathbb{R}).\]

By the Hard Lefschetz theorem \cite[p. 122]{MR1288523} compact Kähler manifolds have the Hard Lefschetz property.
In symplectic geometry there is a conjecture that a closed symplectic manifold which admits an Hamiltonian torus action with isolated fixed points satisfies the Hard Lefschetz property (see \cite[Section 4.2]{birs05}).
So the next corollary might also be of some interest.

\begin{cor}
  Let \(M\) be a closed simply connected \(\text{Spin}^c\)-manifold satisfying the Hard Lefschetz property with \(H^2(M;\mathbb{Z})=\mathbb{Z}\) such that there is a smooth effective action of a torus \(T\) with \(\dim T=2\) on \(M\).
  Let \(u\) be a generator of \(H^2(M;\mathbb{Z})\) and assume
  \begin{equation}\label{eq:1}c_1^c(M)^2-p_1(M)\in 2\mathbb{Z}u^2\subset H^4(M;\mathbb{R}).\end{equation}
  
  Let, moreover, \(X\subset M\) be a string generalized complete intersection of real dimension at least eight.

  Then the Witten genus of \(X\) vanishes.
\end{cor}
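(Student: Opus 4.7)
The plan is to reduce to Theorem~\ref{sec:introduction-4}(2). The topological hypotheses of that theorem are already in place: $M$ is simply connected, so $b_1(M)=0$, and $H^2(M;\mathbb{Z})=\mathbb{Z}$ gives $b_2(M)=1<2=\dim T$. What remains is to verify the conditions $w_2(V)=w_2(M)$ and $p_1(V)=p_1(M)$ modulo torsion as classes on $M$, starting only from the fact that $X$ is string (which gives these equalities a priori only after restriction to $X$).

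First I would fix notation: write $c_1(L_i)=l_iu$ and $c_1^c(M)=mu$ for the defining line bundles of $X$ and the chosen $\text{Spin}^c$-structure, so that $c_1(V)=(\sum l_i)u$ and $p_1(V)=(\sum l_i^2)u^2$. The hypothesis~\eqref{eq:1} yields an integer $\ell$ with $p_1(M)=(m^2-2\ell)u^2$ in $H^4(M;\mathbb{R})$. I may assume $X$ is non-empty (otherwise its Witten genus vanishes trivially), and then every $l_i$ is non-zero, since a complex line bundle of vanishing first Chern class is smoothly trivial and hence has a nowhere-zero section. Translating $p_1(X)=0$ through $p_1(X)=p_1(M)|_X-p_1(V)|_X$ gives
\[
(m^2-2\ell-\textstyle\sum l_i^2)\,(u^2|_X)=0\qquad\text{in }H^4(X;\mathbb{R}).
\]

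The core of the argument, and the step where I expect the Hard Lefschetz hypothesis to earn its keep, is to show that $u^2|_X\neq 0$ in $H^4(X;\mathbb{R})$. I would use the Gysin identity
\[
i_*(u^2|_X)=u^2\cup e(V)=\Bigl(\prod_{i=1}^{k} l_i\Bigr)u^{k+2}\in H^{2k+4}(M;\mathbb{R}).
\]
Writing $n=\dim_{\mathbb{C}}M$, Hard Lefschetz forces $u^j\neq 0$ for all $j\leq n$ (since $u^n$ is a non-zero multiple of the fundamental class, any earlier vanishing would propagate). The dimension assumption $\dim_{\mathbb{R}}X\geq 8$ gives $k\leq n-4$, so $u^{k+2}\neq 0$, and combined with $\prod l_i\neq 0$ this makes $i_*(u^2|_X)$ non-zero. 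Hence $u^2|_X\neq 0$, and the displayed equation above forces $\sum l_i^2=m^2-2\ell$, which is precisely $p_1(V)=p_1(M)$ modulo torsion.

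The Stiefel--Whitney condition then comes for free from the integral identity just obtained: using $l^2\equiv l\pmod 2$ one gets $\sum l_i\equiv\sum l_i^2=m^2-2\ell\equiv m\pmod 2$, so $c_1(V)-c_1^c(M)=(\sum l_i-m)u$ is twice an integral class and $w_2(V)-w_2(M)$ vanishes on $M$. Applying Theorem~\ref{sec:introduction-4}(2) then concludes the proof.
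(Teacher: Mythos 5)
Your proof is correct and follows the same overall strategy as the paper: reduce to Theorem~\ref{sec:introduction-4}(2), verify the hypotheses $b_1(M)=0$ and $\dim T>b_2(M)$, and then check the two conditions $p_1(V)=p_1(M)$ modulo torsion and $w_2(V)=w_2(M)$. The difference is in how the $p_1$ condition is established: the paper simply cites the proof of Corollary~2.36 of Förster and does not give details, whereas you give a clean self-contained argument via the Gysin map, using $i_*(u^2|_X)=u^2\cup e(V)=(\prod_i l_i)\,u^{k+2}$ together with the Hard Lefschetz property and the dimension bound $k\le n-4$ to conclude $u^2|_X\neq 0$. This is a genuine improvement in transparency, and the closing congruence argument for $w_2$ matches the paper's.

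One small logical wrinkle worth tightening: you write that you may assume $X\neq\emptyset$ and that \emph{then} every $l_i$ is non-zero ``since a complex line bundle of vanishing first Chern class has a nowhere-zero section.'' As stated this is a non-sequitur — the defining section of $V$ need not use a nowhere-zero section of a trivial summand $L_i$, so a non-empty $X$ is perfectly compatible with some $l_i=0$. The correct reduction uses rational bordism invariance (recalled in Section~\ref{intro}): the rational oriented bordism class of $X$, and hence its Witten genus, depends only on the isomorphism classes of the $L_i$; if some $L_i$ is trivial one can choose a nowhere-zero section of that summand, showing $X$ is rationally null-bordant and the Witten genus vanishes. After that reduction you may indeed assume $\prod_i l_i\neq 0$, which is what the Gysin step needs. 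The rest of the argument stands.
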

\begin{proof}
  Let \(V=L_1\oplus\dots\oplus L_k\) be the vector bundle over \(M\) from section~\ref{intro} with complex line bundles \(L_i\).
  By Theorem~\ref{sec:introduction-4}, it suffices to show that \(p_1(V)=p_1(M)\) modulo torsion and \(w_2(V)=w_2(M)\).

  By using the Hard Lefschetz property and the dimension and string assumptions on \(X\) one can see as in the proof of Corollary 2.36 in \cite{foerster07:_stolz} that \(p_1(V)=p_1(M)\in H^4(M;\mathbb{R})\).

  To see the equation for \(w_2(M)\) and \(w_2(V)\) let \(m,l_i\in \mathbb{Z}\) such that
  \begin{align*}
    c_1^c(M)&=mu,&c_1(L_i)=l_iu.
  \end{align*}
  Then by our assumption (\ref{eq:1}) we have
  \[(m^2-\sum_{i=1}^k l_i^2)u^2=c_1^c(M)^2-p_1(V)=c_1^c(M)^2-p_1(M)\in 2\mathbb{Z}u^2\subset H^4(M;\mathbb{R}).\]
  Since, by the Hard Lefschetz property, \(u^2\) is not a torsion class, it follows that \(m\equiv m^2\equiv \sum_{i=1}^k l_i^2\equiv\sum_{i=1}^k l_i \mod 2\).
  Therefore we get the claim about the second Stiefel-Whitney classes.
\end{proof}

\begin{remark}
  \begin{itemize}
  \item   If in the situation of the above corollary the \(\text{Spin}^c\)-structure on \(M\) is induced from a unitary structure then \(c_1^c(M)^2-p_1(M)=2c_2(M)\).
  \item As shown in \cite{foerster07:_stolz} the condition~(\ref{eq:1}) is satisfied in all but one case when \(M\) is an irreducible compact Hermitian symmetric space.
  \end{itemize}
\end{remark}

\bibliography{dirac}{}
\bibliographystyle{alpha}
\end{document}